  \newtheorem{theorem}{Theorem}[section]
  \newtheorem{example}{Example}[section]
  \newtheorem{lemma}[theorem]{Lemma}
  \newcommand{\R}{\mathbb{R}}
  \newcommand{\N}{\mathbb{N}}
  \title{On the existence of homoclinic type solutions of a class\\
  of inhomogenous second order Hamiltonian systems}
  \author{\bf Jakub Ciesielski, Joanna Janczewska\\
  \small Faculty of Applied Physics and Mathematics\\
  \small Gdansk University of Technology\\
  \small Narutowicza 11/12, 80-233 Gda\'{n}sk, Poland\\
  \small jakub.ciesielski@pg.edu.pl, joanna.janczewska@pg.edu.pl\\
  \bf Nils Waterstraat\\
  \small School of Mathematics, Statistics and Actuarial Science\\
  \small University of Kent, Canterbury\\
  \small Kent CT2 7NF, England\\
  \small N.Waterstraat@kent.ac.uk\\}
  \date{19th September 2017}
\begin{document}
  
  \maketitle
  
  \begin{abstract}
    We show the existence of homoclinic type solutions of second order Hamiltonian
    systems of the type $\ddot{q}(t)+\nabla_{q}V(t,q(t))=f(t)$, where $t\in\R$,
    the $C^1$-smooth potential $V\colon\R\times\R^n\to\R$ satisfies a relaxed
    superquadratic growth condition, its gradient is bounded in the time variable,
    and the forcing term $f\colon\R\to\R^n$ is sufficiently small in the space
    of square integrable functions. The idea of our proof is to approximate
    the original system by time-periodic ones, with larger and larger time-periods.
    We prove that the latter systems admit periodic solutions of mountain-pass type,
    and obtain homoclinic type solutions of the original system from them by passing
    to the limit (in the topology of almost uniform convergence) when the periods
    go to infinity.
  \end{abstract}
  
  %% -------------------------------------------------------------------
  
  \section{Introduction}
  
  During the past two decades there have been numerous applications of methods
  from the calculus of variations to find periodic, homoclinic and heteroclinic
  solutions for Hamiltonian systems. Many of the striking results that have been
  obtained by variational methods can be found in the well-known monographs
  of Ambrosetti and Coti Zelati \cite{AmC}, Ekeland \cite{Eke}, Hofer and Zehnder
  \cite{HoZ}, Mawhin and Willem \cite{MaW}, as well as in the review articles
  of Rabinowitz \cite{Rab2, Rab3}.\\
  The aim of this paper is to prove the existence of solutions
  of the second order Hamiltonian system
  
  \begin{equation}\label{hs}
    \left
    \{\begin{array}{ll}
      \ddot{q}(t)+\nabla_{q}V(t,q(t))=f(t),\ t\in\R, \\
      \lim\limits_{t\to\pm\infty}q(t)=\lim\limits_{t\to\pm\infty}\dot{q}(t)=0,
    \end{array}
    \right.
  \end{equation}
  where the $C^1$-smooth potential $V\colon\R\times\R^n\to\R$ satisfies
  a relaxed superquadratic growth condition, its gradient $V_{q}\colon\R\times\R^n\to\R^n$
  is uniformly bounded in the time variable on every compact subset of $\R^n$,
  and the norm of the forcing term $f\colon\R\to\R^n$ in the space of square integrable
  functions is smaller than a bound that we state below in our main theorem.\\
  As homoclinic type solutions are global in time, it is reasonable to use
  global methods to find them rather than approaches based on their initial
  value problems. The homogenous systems of \eqref{hs}, i.e. when $f\equiv 0$,
  have been studied extensively under the assumption of superquadratic
  or subquadratic growth of the potential $V(t,q)$ as $|q|\to\infty$.
  Indeed, there are many results on homoclinic solutions for subquadratic
  Hamiltonian systems (cf. e.g. \cite{Rab3}). The first variational results
  for homoclinic solutions of first order Hamiltonian systems with superquadratic
  growth were found by V.\ Coti Zelati, I.\ Ekeland and E.\ S\'{e}r\'{e} in \cite{CES}
  for time-periodic Hamiltonians. Corresponding results for second order Hamiltonian
  systems were obtained in \cite{Rab1} and \cite{CoR}. S.\ Alama and Y.Y.\ Li \cite{AL}
  showed that asymptotic periodicity in time actually suffices to get a homoclinic solution,
  and E.\ Serra, M.\ Tarallo and S.\ Terracini \cite{STT} weakened their periodicity
  condition to almost periodicity in the sense of Bohr. Finally, Hamiltonian systems
  with superquadratic non-periodic potentials were investigated for example
  by  P.\ Montecchiari and M.\ Nolasco \cite{MoN}, A.\ Ambrosetti and M.\ Badiale
  \cite{AmB}, and by the second author of this paper in \cite{Jan1,Jan3,Jan4}.\\
  Our purpose is to generalize Theorem 1.1 of \cite{CJW}, which deals with the existence
  of solutions of the inhomogeneous systems \eqref{hs} under the rather restrictive
  assumption that the potential $V$ is of the special form
  
  \begin{displaymath}
    V(t,q)=-\frac{1}{2}|q|^2+a(t)G(q),
  \end{displaymath}
  where $a\colon\R\to\R$ is a continuous positive bounded function
  and $G\colon\R^n\to\R$ is of class $C^1$ and satisfies the Ambrosetti-Rabinowitz
  superquadratic growth condition. Here, instead, the potential is
  of the more general form
  
  \begin{displaymath}
    V(t,q)=-K(t,q)+W(t,q)
  \end{displaymath}
  with $C^1$-smooth potentials $K$ and $W$ such that
  
  \begin{itemize}
    \item[$(C_1)$] the maps $\nabla_{q}K$ and $\nabla_{q}W$ are uniformly bounded
    in the time variable $t\in\R$ on every compact subset of $\R^n$,
    \item[$(C_2)$] there exist two positive constants $b_1$, $b_2$
    such that for all $t\in\R$ and $q\in\R^n$
    \[ b_{1}|q|^{2}\leq K(t,q)\leq b_{2}|q|^{2}, \]
    \item[$(C_3)$] $K(t,q)\leq (q,\nabla_{q}K(t,q))\leq 2K(t,q)$ for all $t\in\R$ and $q\in\R^n$,
    \item[$(C_4)$] $\nabla_{q}W(t,q)=o(|q|)$ as $|q|\to 0$ uniformly in $t\in\R$,
    \item[$(C_5)$] there is a constant $\mu>2$ such that for all $t\in\R$ and $q\in\R^n\setminus\{0\}$
    \[ 0<\mu W(t,q)\leq (q,\nabla_{q}W(t,q)), \]
    \item[$(C_6)$] $m:=\inf\{W(t,q)\colon t\in\R \ \wedge \ |q|=1\}>0.$
  \end{itemize}
  Here and subsequently, we denote by $(\cdot,\cdot)\colon\R^n\times\R^n\to\R$
  the standard inner product in $\R^n$ and by $|\cdot|$ its induced norm.\\
  Let us point out that under the above assumptions the Hamiltonian system
  \eqref{hs} has the trivial solution when the forcing term $f$ vanishes.
  Therefore it is reasonable to suppose that homoclinic type solutions exist
  when $f$ is sufficiently small. Our main result affirms this hypothesis
  and it also gives an answer to the question how large the forcing term can be.
  
  \begin{theorem}\label{mthm}
    Set $M:=\sup\{W(t,q)\colon t\in\R \ \wedge \ |q|=1\}$ and $\bar{b}_{1}:=\min\{1,2b_{1}\}$.
    Let us assume that $M<\frac{1}{2}\bar{b}_{1}$ and $(C_1)-(C_6)$ are satisfied.
    If the forcing term $f$ is continuous, bounded, and moreover
    
    \begin{equation}\label{force}
      \left(\,\int\limits_{-\infty}\limits^{\infty}|f(t)|^{2}dt\,\right)^\frac{1}{2}
      <\frac{\sqrt{2}}{4}\left(\bar{b}_{1}-2M\right),
    \end{equation}
    then the inhomogenous system \eqref{hs} possesses at least one solution.
  \end{theorem}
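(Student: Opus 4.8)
The plan is to realise \eqref{hs} as a limit of time-periodic problems with periods tending to infinity. For $k\in\N$ I would work in the Hilbert space $E_{k}$ of $2k$-periodic functions $q\in H^{1}_{\mathrm{loc}}(\R,\R^{n})$ with $\|q\|_{E_{k}}^{2}=\int_{-k}^{k}(|\dot q(t)|^{2}+|q(t)|^{2})\,dt$, take $f_{k}$ to be the $2k$-periodic extension of $f|_{(-k,k]}$ (so that $\|f_{k}\|_{L^{2}(-k,k)}\le\|f\|_{L^{2}(\R)}$, $\|f_{k}\|_{\infty}\le\|f\|_{\infty}$, and $f_{k}\to f$ uniformly on compact sets), and consider
\[ I_{k}(q)=\int_{-k}^{k}\Big(\tfrac{1}{2}|\dot q(t)|^{2}+K(t,q(t))-W(t,q(t))+(f(t),q(t))\Big)\,dt. \]
Standard arguments based on $(C_1)$, $(C_2)$ and $(C_5)$ show that $I_{k}\in C^{1}(E_{k},\R)$ and that its critical points are exactly the $2k$-periodic weak solutions of $\ddot q+\nabla_{q}V(t,q)=f_{k}$; by regularity these are $C^{1}$ on $\R$ and classical on every compact interval contained in $(-k,k)$.

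First I would verify that $I_{k}$ has a mountain pass geometry that is uniform in $k\ge1$. For $2k\ge1$ one has the elementary bound $\|q\|_{\infty}\le\sqrt{2}\,\|q\|_{E_{k}}$, while $(C_5)$ and $\mu>2$ give $W(t,q)\le M|q|^{\mu}\le M|q|^{2}$ for $|q|\le1$; combining these with $(C_2)$ yields
\[ I_{k}(q)\ge\big(\tfrac{1}{2}\bar{b}_{1}-M\big)\|q\|_{E_{k}}^{2}-\|f\|_{L^{2}}\|q\|_{E_{k}}\qquad\text{whenever }\|q\|_{E_{k}}\le\tfrac{1}{\sqrt{2}}. \]
Since $M<\tfrac{1}{2}\bar{b}_{1}$, the hypothesis \eqref{force} is \emph{exactly} what is needed to pick a radius $\rho$ with $\tfrac{2\|f\|_{L^{2}}}{\bar{b}_{1}-2M}<\rho\le\tfrac{1}{\sqrt{2}}$, and then $I_{k}(q)\ge\alpha:=\rho\big((\tfrac{1}{2}\bar{b}_{1}-M)\rho-\|f\|_{L^{2}}\big)>0$ on $\{\|q\|_{E_{k}}=\rho\}$, with $\alpha$ independent of $k$. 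For the far point I would fix once and for all a nonzero $\varphi\in C^{\infty}_{0}((-1,1),\R^{n})$; by $(C_2)$, $(C_5)$ and $(C_6)$ one has $I_{k}(\lambda\varphi)\to-\infty$ as $\lambda\to+\infty$, with an estimate not depending on $k$ because $\varphi$ is supported in $[-1,1]$, so one fixes $e:=\lambda_{0}\varphi$ with $\|e\|_{E_{k}}>\rho$ and $I_{k}(e)<0$.

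Next, for each fixed $k$ I would check the Palais--Smale condition: using $(C_2)$, $(C_3)$ and $(C_5)$ one obtains $I_{k}(q)-\tfrac{1}{\mu}I_{k}'(q)q\ge c_{*}\|q\|_{E_{k}}^{2}-c_{**}\|q\|_{E_{k}}$ with $c_{*}>0$, so Palais--Smale sequences are bounded, and the compact embedding $E_{k}\hookrightarrow C([-k,k],\R^{n})$ gives strong convergence along a subsequence. The Mountain Pass Theorem then produces a critical point $q_{k}$ with $I_{k}(q_{k})=c_{k}\in[\alpha,C_{0}]$, where $C_{0}=\max_{s\in[0,1]}I_{k}(se)$ is again independent of $k$ (test with the segment from $0$ to $e$ and use $K(t,q)\le b_{2}|q|^{2}$). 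Now the limit $k\to\infty$: from $c_{k}=I_{k}(q_{k})-\tfrac{1}{\mu}I_{k}'(q_{k})q_{k}\le C_{0}$ and the coercivity estimate one gets $\|q_{k}\|_{E_{k}}\le C$ uniformly, hence $\|q_{k}\|_{\infty}\le R:=\sqrt{2}\,C$; then $(C_1)$ together with boundedness of $f$ bounds $\|\ddot q_{k}\|_{\infty}$ through the equation, and a mean value argument on intervals of length one (using $\int_{-k}^{k}|\dot q_{k}|^{2}\le C^{2}$) bounds $\|\dot q_{k}\|_{\infty}$, all uniformly. By Arzel\`a--Ascoli and a diagonal procedure a subsequence $q_{k_{j}}$ converges in $C^{1}_{\mathrm{loc}}(\R,\R^{n})$ to some $q_{0}$; passing to the limit in $\ddot q_{k_{j}}=\nabla_{q}K(t,q_{k_{j}})-\nabla_{q}W(t,q_{k_{j}})+f_{k_{j}}$ (with $f_{k_{j}}\to f$ locally uniformly and $\nabla_{q}K,\nabla_{q}W$ continuous) shows $q_{0}\in C^{2}$ and $\ddot q_{0}+\nabla_{q}V(t,q_{0})=f$ on $\R$. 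Fatou's lemma applied to $\int_{-A}^{A}(|\dot q_{k_{j}}|^{2}+|q_{k_{j}}|^{2})\le C^{2}$ gives $q_{0}\in H^{1}(\R,\R^{n})$, hence $q_{0}(t)\to0$ as $t\to\pm\infty$; since moreover $\dot q_{0}\in L^{2}(\R)$ and $\ddot q_{0}\in L^{\infty}(\R)$ (because $q_{0}$ is bounded, $(C_1)$ holds, and $f$ is bounded), also $\dot q_{0}(t)\to0$. Thus $q_{0}$ solves \eqref{hs}; if $f\not\equiv0$ it is nontrivial, because then $q\equiv0$ does not solve the equation ($\nabla_{q}V(t,0)=0$ since by $(C_2)$ the origin minimises $K(t,\cdot)$ and by $(C_4)$ one has $\nabla_{q}W(t,0)=0$), while if $f\equiv0$ the function $q\equiv0$ already solves \eqref{hs}.

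The heart of the matter is the uniformity in $k$: the mountain pass level $\alpha$, the ceiling $C_{0}$, and hence the a priori bound $\|q_{k}\|_{E_{k}}\le C$, must not degrade as the period grows, and the precise constant in \eqref{force} is forced by the requirement that $\alpha$ be positive on a sphere small enough that $\|q\|_{\infty}\le1$ there, so that the superquadratic term $W$ is dominated by $M|q|^{2}$. Once these uniform bounds are available the passage to the limit is comparatively soft: the equation upgrades the Sobolev bound to a $C^{2}_{\mathrm{loc}}$ bound, and Fatou's lemma turns it into $q_{0}\in H^{1}(\R)$, from which the homoclinic decay at $\pm\infty$ follows. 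The verification of the Palais--Smale condition for fixed $k$ and of the $C^{1}$-regularity of $I_{k}$ are routine under $(C_1)$--$(C_3)$, $(C_5)$.
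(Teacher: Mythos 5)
Your proposal is correct and follows essentially the same route as the paper: periodic approximations on $[-k,k]$, a mountain-pass geometry made uniform in $k$ by a fixed spike supported in $[-1,1]$ (this is exactly the role of the paper's Lemma \ref{lemma1} and the functions $e_k$), a $k$-independent bound on the mountain-pass levels and hence on $\|q_k\|_{E_k}$, and a passage to the limit. The only differences are minor: the paper delegates the limiting step to Krawczyk's approximative method (Theorem \ref{approx}) and obtains $\dot{q}(t)\to 0$ from the inequality \eqref{izyjan} of Izydorek--Janczewska, whereas you carry out the Arzel\`a--Ascoli/diagonal and Fatou arguments directly and deduce $\dot{q}\to 0$ from $\dot{q}\in L^{2}(\R)$ combined with the boundedness of $\ddot{q}$; both are sound.
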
  
  
  The idea of our proof, which we give in the following second section,
  is to approximate the original system \eqref{hs} by time-periodic ones,
  with larger and larger time-periods. We show that the approximating systems
  admit periodic solutions of mountain-pass type, and obtain a homoclinic type
  solution of the original system from them by passing to the limit
  (in the topology of almost uniform convergence) when the periods go to infinity.
  Finally, we discuss some examples of Theorem \ref{mthm} in Section \ref{Examples}.
  
  \section{Proof of Theorem \ref{mthm}}\label{Proof}
  
  For each $k\in\N$, let $E_k=W^{1,2}_{2k}(\R,\R^n)$ be the Sobolev space
  of $2k$-periodic functions on $\R$ with values in $\R^n$ and the standard norm
  
  \begin{displaymath}
    \|q\|_{E_k}=\left(\,\int\limits^{k}\limits_{-k}\left(|\dot{q}(t)|^{2}
    + |q(t)|^{2}\right)dt\,\right)^{\frac{1}{2}}.
  \end{displaymath}
  We begin with the following estimate that is crucial in the main part
  of our proof below.
  
  \begin{lemma}\label{lemma1}
    For every $\zeta\in\R$
    and $q\in E_k$ we have
    
    \begin{displaymath}
      \int\limits_{-k}\limits^{k}W(t,\zeta q(t))dt
      \geq m|\zeta|^{\mu}\int\limits_{-k}\limits^{k}|q(t)|^{\mu}dt-2km.
    \end{displaymath}
  \end{lemma}
  
  \begin{proof}\,
    Note at first that the assertion is obviously true if $q=0$ or $\zeta=0$.
    Hence we can assume in the rest of the proof that $\zeta\neq 0$ and $q\neq 0$.
    Then it follows from $(C_5)$ that, for every $q\neq 0$ and $t\in\R$,
    the function $z\colon(0,+\infty)\to\R$ defined by
    
    \begin{displaymath}
      z(\zeta)=W\left(t,\frac{q}{\zeta}\right)\zeta^{\mu}
    \end{displaymath}
    is non-increasing. Hence, for every $t\in\R$,
    
    \begin{equation}\label{leq}
      W(t,q)\leq W\left(t,\frac{q}{|q|}\right)|q|^{\mu},\ \textrm{if} \ 0<|q|\leq 1
    \end{equation}
    and
    
    \begin{equation}\label{geq}
      W(t,q)\geq W\left(t,\frac{q}{|q|}\right)|q|^{\mu},\ \textrm{if} \ |q|\geq 1.
    \end{equation}
    We now fix $\zeta\in\R\setminus\left\{0\right\}$, $q\in E_k\setminus\left\{0\right\}$
    and set
    
    \begin{align*}
      A_k&=\left\{t\in[-k,k]\colon |\zeta q(t)|\leq 1\right\},\\
      B_k&=\left\{t\in[-k,k]\colon |\zeta q(t)|\geq 1\right\}.
    \end{align*}
    By \eqref{geq}, we get
    
    \begin{align*}
      \int_{-k}^{k}W(t,\zeta q(t))dt & \geq \int_{B_k}W(t,\zeta q(t))dt
      \geq \int_{B_k}W\left(t,\frac{\zeta q(t)}{|\zeta q(t)|}\right)|\zeta q(t)|^{\mu}dt \\ &
      \geq m \int_{B_k}|\zeta q(t)|^{\mu}dt \geq m\int_{-k}^k|\zeta q(t)|^{\mu}dt
      - m\int_{A_k}|\zeta q(t)|^{\mu}dt \\ &
      \geq m|\zeta |^{\mu}\int_{-k}^k|q(t)|^{\mu}dt - 2km,
    \end{align*}
    which completes the proof.
  \end{proof}
  Further, to prove Theorem \ref{mthm}, we need the following approximative method.
  
  \begin{theorem}[Approximative Method, \cite{Kra}]\label{approx}
    Let $f\colon\R\to\R^n$ be a non-trivial, bounded, continuous
    and square integrable map. Assume that $V\colon\R\times\R^n\to\R$
    is a $C^1$-smooth potential such that $\nabla_{q}V\colon\R\times\R^n\to\R^n$
    is uniformly bounded in $t$ on every compact subset of $\R^n$, i.e.
    
    \begin{displaymath}
      \forall \ L>0 \ \exists \ C>0 \ \forall \ q\in\R^n \ \forall \ t\in\R \ \ 
      |q|\leq L \Rightarrow |\nabla_{q}V(t,q)|\leq C.
    \end{displaymath}
    Suppose that for each $k\in\N$ the boundary value problem
    
    \begin{displaymath}
      \left\{
      \begin{array}{ll}
        \ddot{q}(t)+\nabla_{q}V_{k}(t,q(t))=f_k(t),\\
        q(-k)-q(k)=\dot{q}(-k)-\dot{q}(k)=0,
      \end{array}
      \right.
    \end{displaymath}
    where $f_k\colon\R\to\R^n$ stands for the $2k$-periodic extension
    of $f|_{[-k,k)}$ to $\R$ and $V_k\colon\R\times\R^n\to\R$ denotes
    the $2k$-periodic extension of $V|_{[-k,k)\times\R^n}$ to $\R\times\R^n$,
    has a periodic solution $q_k\in E_k$ and $\{\|q_k\|_{E_k}\}_{k\in\N}$
    is a bounded sequence in $\R$.
    Then there exists a subsequence $\{q_{k_j}\}_{j\in\N}$
    converging in the topology of $C^{2}_{loc}(\R,\R^n)$ to a function
    $q\in W^{1,2}(\R,\R^n)$ which is a solution of
    
    \begin{displaymath}
      \ddot{q}(t)+\nabla_{q}V(t,q(t))=f(t),\ t\in\R.
    \end{displaymath}
  \end{theorem}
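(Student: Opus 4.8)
The plan is to extract the limit function via the Arzel\`a--Ascoli theorem together with a diagonal argument over an exhausting sequence of compact intervals, so the heart of the matter is to establish bounds on $q_k$, $\dot q_k$ and $\ddot q_k$ that are uniform in $k$ (respectively, uniform on each fixed compact interval).

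First I would upgrade the bound on $\{\|q_k\|_{E_k}\}$ to a uniform sup-norm bound. For a $2k$-periodic $q\in E_k$ one picks $s\in[-k,k]$ with $|q(s)|^{2}\le\frac{1}{2k}\int_{-k}^{k}|q(t)|^{2}\,dt$ and writes $|q(t)|^{2}=|q(s)|^{2}+2\int_{s}^{t}(q(\tau),\dot q(\tau))\,d\tau$; since $2k\ge 1$ this yields $\|q\|_{L^{\infty}}\le c\,\|q\|_{E_k}$ for a constant $c$ independent of $k$. Hence there is $L>0$ with $\|q_k\|_{L^{\infty}}\le L$ for all $k$. By the hypothesis on $\nabla_{q}V$ there is then $C>0$ with $|\nabla_{q}V(t,q_k(t))|\le C$ for all $t\in\R$ and all $k$, and since $V_k$ agrees with $V$ and $f_k$ with $f$ on $(-k,k)$, the equation gives $|\ddot q_k(t)|=|f_k(t)-\nabla_{q}V_k(t,q_k(t))|\le\|f\|_{L^{\infty}}+C$ on $(-k,k)$. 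Fixing a compact interval $[-N,N]$ and choosing $t_0\in[-N,N]$ with $|\dot q_k(t_0)|^{2}\le\frac{1}{2N}\int_{-N}^{N}|\dot q_k|^{2}\,dt\le\frac{1}{2N}\|q_k\|_{E_k}^{2}$, integration of $\ddot q_k$ from $t_0$ bounds $\|\dot q_k\|_{L^{\infty}([-N,N])}$ uniformly in $k\ge N$. Thus on every $[-N,N]$ the families $\{q_k\}$ and $\{\dot q_k\}$ are uniformly bounded, and both are uniformly Lipschitz (by the bounds on $\dot q_k$ and $\ddot q_k$), hence equicontinuous.

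Next, Arzel\`a--Ascoli on $[-N,N]$ for $N=1,2,\dots$ combined with a diagonal argument produces a subsequence $\{q_{k_j}\}$ and a function $q\in C^{1}(\R,\R^n)$ with $q_{k_j}\to q$ and $\dot q_{k_j}\to\dot q$ uniformly on every compact interval. Passing to the limit in $\ddot q_{k_j}(t)+\nabla_{q}V_{k_j}(t,q_{k_j}(t))=f_{k_j}(t)$: for fixed $t$ and $j$ large one has $k_j>|t|$, so $\nabla_{q}V_{k_j}(t,\cdot)=\nabla_{q}V(t,\cdot)$ and $f_{k_j}(t)=f(t)$, while $\nabla_{q}V(t,q_{k_j}(t))\to\nabla_{q}V(t,q(t))$ by continuity of $\nabla_{q}V$; hence $\ddot q_{k_j}(t)$ converges, and since $\dot q_{k_j}\to\dot q$ uniformly on compacts the limit must be $\ddot q(t)$. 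Therefore $\ddot q(t)+\nabla_{q}V(t,q(t))=f(t)$ for every $t\in\R$, so $q\in C^{2}(\R,\R^n)$; and as $\ddot q_{k_j}=f_{k_j}-\nabla_{q}V_{k_j}(\cdot,q_{k_j})\to f-\nabla_{q}V(\cdot,q)=\ddot q$ uniformly on compacts (using uniform continuity of $\nabla_{q}V$ on compact sets together with the uniform convergence $q_{k_j}\to q$), the convergence $q_{k_j}\to q$ in fact holds in $C^{2}_{loc}(\R,\R^n)$.

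Finally, to see $q\in W^{1,2}(\R,\R^n)$, fix $N$ and use that $q_{k_j}\to q$ in $C^{1}([-N,N])$ (hence in $L^{2}$) to get, for $k_j\ge N$, the estimate $\int_{-N}^{N}(|\dot q|^{2}+|q|^{2})\,dt=\lim_{j\to\infty}\int_{-N}^{N}(|\dot q_{k_j}|^{2}+|q_{k_j}|^{2})\,dt\le\liminf_{j\to\infty}\|q_{k_j}\|_{E_{k_j}}^{2}$, which is bounded uniformly in $j$ by hypothesis; letting $N\to\infty$ gives $q\in W^{1,2}(\R,\R^n)$. The main obstacle is the first step: obtaining a sup-norm bound on $q_k$ with a constant independent of the period $2k$ (so that the hypothesis on $\nabla_{q}V$ applies uniformly in $k$) and then propagating it to $k$-uniform bounds on $\dot q_k$ and $\ddot q_k$ on each compact interval; once these are in hand, the extraction and the passage to the limit are routine.
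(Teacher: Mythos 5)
The paper itself gives no proof of Theorem \ref{approx}: it is imported from Krawczyk \cite{Kra} (building on \cite{Rab1}, \cite{IzJ}, \cite{Jan2}), so there is nothing internal to compare against; measured against the standard proof in those references, your argument is correct and is essentially that proof. Your first step is exactly the proof of the paper's inequality \eqref{sqrt} with a $k$-independent constant, which converts the $E_k$-bound into a uniform $L^\infty$-bound; the hypothesis on $\nabla_q V$ together with boundedness of $f$ then bounds $\ddot q_k$, the mean-value choice of $t_0$ bounds $\dot q_k$ on each $[-N,N]$ for $k\geq N$, Arzel\`a--Ascoli plus a diagonal extraction gives $C^1_{loc}$-convergence, the equation upgrades it to $C^2_{loc}$, and the Fatou-type estimate over $[-N,N]$ yields $q\in W^{1,2}(\R,\R^n)$. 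Two points to phrase carefully in a final write-up: the extensions $V_k$, $f_k$ are in general discontinuous at odd multiples of $k$, which is harmless only because on a fixed $[-N,N]$ you work with $k_j>N$, where they coincide with $V$ and $f$ (you implicitly do this); and the clause ``the limit must be $\ddot q(t)$'' is slightly circular as written --- the clean statement is that $\ddot q_{k_j}\to f-\nabla_qV(\cdot,q)$ uniformly on compacts while $\dot q_{k_j}\to \dot q$, hence $\dot q$ is continuously differentiable with derivative $f-\nabla_qV(\cdot,q)$, which gives $q\in C^{2}$, the equation, and the $C^{2}_{loc}$-convergence all at once.
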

  \noindent
  The approximative method was introduced by Paul H.\ Rabinowitz in \cite{Rab1}
  for homogenous second order Hamiltonian systems with a time-periodic potential.
  Later, the second author of this paper extended it to inhomogenous time-periodic
  Hamiltonian systems (see \cite{IzJ} and \cite{Jan2}), and more recently, Robert
  Krawczyk generalized it to the case of aperiodic potentials.\\
  Let us now consider for $k\in\mathbb{N}$ the boundary value problems
  
  \begin{equation}\label{hsk}
    \left\{
    \begin{array}{ll}
      \ddot{q}(t)-\nabla_{q}K_{k}(t,q(t))+\nabla_{q}W_{k}(t,q(t))=f_k(t),\\
      q(-k)-q(k)=\dot{q}(-k)-\dot{q}(k)=0,
    \end{array}
    \right.
  \end{equation}
  where $f_k\colon\R\to\R^n$ stands for the $2k$-periodic
  extension of $f|_{[-k,k)}$ to $\R$, and $K_k\colon\R\times\R^n\to\R$,
  $W_k\colon\R\times\R^n\to\R$ are the $2k$-periodic extensions
  of $K|_{[-k,k)\times\R^n}$ and $W|_{[-k,k)\times\R^n}$ to $\R\times\R^n$.\\
  As we have already mentioned in the introduction, our proof consists of two steps.
  First, we show the existence of  solutions of \eqref{hsk}, and second, we use
  Theorem \ref{approx} to find a solution of \eqref{hs}.\\
  For our first step, let us consider the functionals $I_{k}\colon E_k\to\R$ given by
  
  \begin{equation}\label{action}
    I_k(q)=\int\limits^{k}\limits_{-k}\left(\frac{1}{2}|\dot{q}(t)|^2+K_k(t,q(t))
    -W_k(t,q(t))\right)dt+\int\limits^{k}\limits_{-k}(f_k(t),q(t))dt.
  \end{equation}
  Standard arguments show that $I_k\in C^1(E_k,\R)$, and 
  
  \begin{equation}\label{derivative}
    I'_{k}(q)v=\int\limits^{k}\limits_{-k}\left((\dot{q}(t),\dot{v}(t))
    +(\nabla_{q}K_k(t,q(t))-\nabla_{q}W_k(t,q(t)),v(t))\right)dt
    +\int\limits^{k}\limits_{-k}(f_k(t),v(t))dt.
  \end{equation}
  Moreover, the critical points of the functional $I_k$ are classical $2k$-periodic
  solutions of \eqref{hsk}, and we now show their existence by using the Mountain
  Pass Theorem. Let us recall the latter result before proceeding with our proof.

  \begin{theorem}[Mountain Pass Theorem, \cite{AmR}]\label{Pass}
    Let $E$ be a real Banach space and $I\colon E\to\R$ a $C^1$-smooth
    functional. If $I$ satisfies the following conditions:
    
    \begin{itemize}
      \item[$(i)$] $I(0)=0$,
      \item[$(ii)$] every sequence $\left\{u_j\right\}_{j\in\N}$ in $E$
      such that $\left\{I(u_j)\right\}_{j\in\N}$ is bounded in $\R$
      and $I'(u_j)\to 0$ in $E^{*}$ as $j\to +\infty$
      contains a convergent subsequence (the Palais-Smale condition),
      \item[$(iii)$] there exist constans $\rho,\alpha>0$ such that
      $I|_{\partial B_{\rho}(0)}\geq\alpha$,
      \item[$(iv)$] there exists $e\in E\setminus\bar{B}_{\rho}(0)$
      such that $I(e)\leq 0$,
    \end{itemize}
    where $B_{\rho}(0)$ is the open ball of radius $\rho$ about $0$ in $E$,
    then $I$ possesses a critical value $c\geq\alpha$ given by
    
    \begin{equation}\label{c}
      c=\inf_{g\in\Gamma}\max_{s\in[0,1]}I(g(s)),
    \end{equation}
    where
    
    \begin{displaymath}
      \Gamma=\left\{g\in C([0,1],E)\colon \ g(0)=0,\ g(1)=e \right\}.
    \end{displaymath}
  \end{theorem}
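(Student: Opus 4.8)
The plan is to argue by contradiction: assuming that the minimax value $c$ defined in \eqref{c} is a regular value of $I$, I would use a quantitative deformation lemma to push a near-optimal path in $\Gamma$ down to a path along which $I$ stays strictly below $c$, thereby contradicting the definition of $c$ as an infimum. The engine of the argument is the deformation lemma, and the Palais--Smale condition $(ii)$ is exactly what makes it effective near the level $c$; the hypotheses $(i)$, $(iii)$, $(iv)$ enter only through the geometry of the sublevel sets. Note first that $\Gamma\neq\emptyset$, since it contains the segment $s\mapsto se$, so $c\leq\max_{s\in[0,1]}I(se)<\infty$.

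As a preliminary I would check that $c\geq\alpha$. Fix any $g\in\Gamma$. Since $g(0)=0$ lies in the open ball $B_{\rho}(0)$ while $g(1)=e$ lies outside $\bar{B}_{\rho}(0)$, the continuous function $s\mapsto\|g(s)\|$ attains the value $\rho$ at some $s_{0}\in(0,1)$, so $g(s_{0})\in\partial B_{\rho}(0)$, and by $(iii)$
\begin{displaymath}
  \max_{s\in[0,1]}I(g(s))\geq I(g(s_{0}))\geq\alpha .
\end{displaymath}
Taking the infimum over $g\in\Gamma$ gives $c\geq\alpha>0$.

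Now suppose, for contradiction, that $c$ is not a critical value of $I$. Choose $\bar{\varepsilon}\in(0,c/2)$; then $I(0)=0$ and $I(e)\leq 0$ both lie below $c-\bar{\varepsilon}$, hence outside the band $[c-\bar{\varepsilon},c+\bar{\varepsilon}]$. The deformation lemma supplies some $\varepsilon\in(0,\bar{\varepsilon})$ and a continuous map $\eta\colon[0,1]\times E\to E$ such that $\eta(0,\cdot)=\mathrm{id}$, $\eta(t,u)=u$ whenever $I(u)\notin[c-\bar{\varepsilon},c+\bar{\varepsilon}]$, and $\eta\bigl(1,\{u\colon I(u)\leq c+\varepsilon\}\bigr)\subseteq\{u\colon I(u)\leq c-\varepsilon\}$. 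Using \eqref{c}, pick $g\in\Gamma$ with $\max_{s\in[0,1]}I(g(s))\leq c+\varepsilon$, and put $h(s)=\eta(1,g(s))$. Since $I(g(0))=0$ and $I(g(1))\leq 0$ lie outside the band, $h(0)=\eta(1,0)=0$ and $h(1)=\eta(1,e)=e$, so $h\in\Gamma$; but $I(g(s))\leq c+\varepsilon$ for every $s$ gives $\max_{s\in[0,1]}I(h(s))\leq c-\varepsilon<c$, contradicting $c=\inf_{g'\in\Gamma}\max_{s}I(g'(s))$. Hence $c$ is a critical value, and together with $c\geq\alpha$ this proves the theorem.

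I expect the main obstacle to be the deformation lemma itself. The delicate step is to construct, on the open set $\{u\in E\colon I'(u)\neq 0\}$, a locally Lipschitz continuous pseudo-gradient vector field $v$ --- one satisfying $\|v(u)\|\leq 2\|I'(u)\|_{E^{*}}$ and $I'(u)v(u)\geq\|I'(u)\|_{E^{*}}^{2}$ --- which exists by a partition-of-unity argument on the metric space $E$, but which is needed in place of a genuine gradient because $E$ need not be Hilbert. After normalizing $v$ and multiplying by a Urysohn cutoff that vanishes outside the band $[c-\bar{\varepsilon},c+\bar{\varepsilon}]$, one integrates the resulting autonomous ODE to obtain the global flow $\eta$, which decreases $I$ along its orbits and is stationary off the band. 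That a single $\varepsilon>0$ can be chosen for which this flow lowers the value of $I$ by $2\varepsilon$ within unit time on the set $\{u\colon I(u)\leq c+\varepsilon\}$ is precisely where the Palais--Smale condition is used: were it false, one could extract a Palais--Smale sequence at level $c$ with no convergent subsequence. A variant that avoids constructing the flow explicitly is to apply Ekeland's variational principle on the complete metric space $(\Gamma,d)$, with $d(g_{1},g_{2})=\max_{s\in[0,1]}\|g_{1}(s)-g_{2}(s)\|$, to the continuous functional $\Phi(g)=\max_{s\in[0,1]}I(g(s))$; a near-minimiser of $\Phi$ carries a Palais--Smale sequence at level $c$, and condition $(ii)$ then produces the desired critical point with value $c$.
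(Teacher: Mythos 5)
The paper does not prove this statement at all: it is quoted verbatim as a classical tool and attributed to Ambrosetti--Rabinowitz \cite{AmR}, so there is no ``paper proof'' to compare against. Your outline is the standard modern proof of that cited theorem and it is sound. The two substantive steps are handled correctly: the estimate $c\geq\alpha$ via the intermediate value theorem applied to $s\mapsto\|g(s)\|$ (every path in $\Gamma$ must cross $\partial B_{\rho}(0)$, and $\Gamma\neq\emptyset$ because of the segment $s\mapsto se$), and the contradiction argument in which a path $g$ with $\max_s I(g(s))\leq c+\varepsilon$ is deformed by $\eta(1,\cdot)$ to a path $h$ with $\max_s I(h(s))\leq c-\varepsilon$, the endpoints being fixed because $I(0)=0$ and $I(e)\leq 0$ lie below $c-\bar{\varepsilon}$ (here you correctly use $c\geq\alpha>0$ and $\bar{\varepsilon}<c/2$ before invoking the deformation). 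The only part not carried out in full is the deformation lemma itself --- pseudo-gradient field, cutoff, flow, and the role of the Palais--Smale condition in securing a uniform lower bound on $\|I'\|_{E^{*}}$ in a band about level $c$ --- but you identify these ingredients accurately, and your alternative route via Ekeland's variational principle on the path space $(\Gamma,d)$ is likewise a known complete proof. In short: your argument reconstructs the original Ambrosetti--Rabinowitz proof (in its now-standard deformation-lemma form), whereas the paper simply imports the theorem as a black box to produce the critical values $c_k$ of the truncated functionals $I_k$; nothing in your sketch conflicts with how the theorem is applied there.
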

  \noindent
  We now denote by $L^{\infty}_{2k}(\R,\R^n)$ the space of $2k$-periodic
  essentially bounded functions from $\R$ into $\R^n$ equipped with the norm
  
  \begin{displaymath}
    \|q\|_{L^{\infty}_{2k}}=\textrm{ess}\sup\left\{|q(t)|\colon t\in[-k,k]\right\}.
  \end{displaymath}
  It is well known that for each $k\in\N$ and $q\in E_k$ 
  
  \begin{equation}\label{sqrt}
    \|q\|_{L_{2k}^{\infty}}\leq\sqrt{2}\|q\|_{E_k}.
  \end{equation}
  Furthermore, we will write $L^2_{2k}(\R,\R^n)$ for the Hilbert space
  of $2k$-periodic functions on $\R$ with values in $\R^n$ and with the norm
  
  \begin{displaymath}
    \|q\|_{L^{2}_{2k}}
    =\left(\,\int\limits^{k}\limits_{-k}|q(t)|^{2}dt\,\right)^{\frac{1}{2}}.
  \end{displaymath}
  Note that by \eqref{force},
  
  \begin{equation}\label{estimation}
    \|f_k\|_{L^{2}_{2k}}<\frac{\sqrt{2}}{4}\left(\bar{b}_{1}-2M\right).
  \end{equation}
  
  The following lemma shows the existence of a solution of \eqref{hsk}
  and is the main part of the first step of our proof.

  \begin{lemma}\label{lemma2}
    For each $k\in\N$, the functional $I_k$ has a critical value
    of mountain pass type.
  \end{lemma}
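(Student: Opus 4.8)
The plan is to verify the four hypotheses of the Mountain Pass Theorem (Theorem \ref{Pass}) for the functional $I_k$ on $E_k$, and to check that $I_k$ satisfies the Palais--Smale condition. Hypothesis $(i)$ is immediate since $I_k(0)=0$.

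For the geometry near the origin, hypothesis $(iii)$, I would estimate $I_k(q)$ from below for $\|q\|_{E_k}=\rho$ small. Using $(C_2)$ we have $K_k(t,q)\geq b_1|q|^2$, so $\frac12|\dot q|^2+K_k(t,q)\geq\frac12\bar b_1\|q\|_{E_k}^2$ after integrating (recall $\bar b_1=\min\{1,2b_1\}$). For the $W_k$ term, $(C_4)$ together with $(C_5)$ (which controls growth at infinity) gives that for every $\varepsilon>0$ there is $C_\varepsilon$ with $W(t,q)\leq\varepsilon|q|^2+C_\varepsilon|q|^\mu$; combining with the Sobolev embedding \eqref{sqrt} one bounds $\int_{-k}^k W_k(t,q)\,dt$ by $\varepsilon\|q\|_{L^2_{2k}}^2+C'_\varepsilon\|q\|_{E_k}^\mu$. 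The forcing term is handled by Cauchy--Schwarz: $|\int_{-k}^k(f_k,q)\,dt|\leq\|f_k\|_{L^2_{2k}}\|q\|_{L^2_{2k}}\leq\|f_k\|_{L^2_{2k}}\|q\|_{E_k}$. Hence $I_k(q)\geq\frac12\bar b_1\rho^2-\varepsilon\rho^2-C'_\varepsilon\rho^\mu-\|f_k\|_{L^2_{2k}}\rho$; since $\mu>2$, choosing $\varepsilon$ small and then $\rho$ small makes the quadratic and $\mu$-power terms dominate, and a careful bookkeeping using \eqref{estimation} (i.e. $\|f_k\|_{L^2_{2k}}<\frac{\sqrt2}{4}(\bar b_1-2M)$) should yield $I_k|_{\partial B_\rho(0)}\geq\alpha>0$ for a suitable $\rho$. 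This is where the precise numerical hypothesis $M<\frac12\bar b_1$ and \eqref{force} must be used to close the estimate with a strictly positive $\alpha$; getting the constants to line up is the most delicate bookkeeping, though not conceptually hard.

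For hypothesis $(iv)$ I would pick a fixed nonzero $q_0\in E_k$ (say a normalized bump or even the constant function of norm one on $[-k,k]$) and consider $I_k(\zeta q_0)$ as $\zeta\to+\infty$. The kinetic and $K_k$ terms grow at most like $\zeta^2$ (using $K_k(t,q)\leq b_2|q|^2$ from $(C_2)$), the forcing term grows linearly in $\zeta$, but Lemma \ref{lemma1} gives $\int_{-k}^k W_k(t,\zeta q_0)\,dt\geq m|\zeta|^\mu\int_{-k}^k|q_0|^\mu\,dt-2km$, which dominates since $\mu>2$. Hence $I_k(\zeta q_0)\to-\infty$, so some $e=\zeta q_0$ with $\zeta$ large lies outside $\bar B_\rho(0)$ with $I_k(e)\leq 0$.

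The main obstacle is hypothesis $(ii)$, the Palais--Smale condition. Given a sequence $\{u_j\}$ with $\{I_k(u_j)\}$ bounded and $I_k'(u_j)\to 0$, I would first show $\{u_j\}$ is bounded in $E_k$: compute $\mu I_k(u_j)-I_k'(u_j)u_j$ and use $(C_3)$ (which gives $\mu K_k-(q,\nabla_q K_k)\geq(\mu-2)K_k\geq(\mu-2)b_1|q|^2$, noting $\mu>2$) together with $(C_5)$ (which makes the $W_k$ terms cancel favorably, $\mu W_k-(q,\nabla_q W_k)\leq 0$) and the bound on $\|f_k\|_{L^2_{2k}}$ via Cauchy--Schwarz; this produces a coercive-type inequality forcing $\|u_j\|_{E_k}$ bounded. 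Then, since $E_k=W^{1,2}_{2k}$ embeds compactly into $C([-k,k],\R^n)$ (the interval is bounded), a subsequence of $\{u_j\}$ converges uniformly; a standard argument writing $I_k'(u_j)-I_k'(u_i)$ applied to $u_j-u_i$, combined with the uniform convergence and $(C_1)$ (local boundedness of the gradients), then upgrades this to convergence in $E_k$. Care is needed in the boundedness step to handle the indefinite sign of $V=-K+W$ correctly, but $(C_3)$ and $(C_5)$ are exactly tailored for this. Finally, the critical value $c\geq\alpha>0$ furnished by the Mountain Pass Theorem is a mountain pass critical value of $I_k$, whose corresponding critical point solves \eqref{hsk}, completing the proof of the lemma.
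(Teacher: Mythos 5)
Your overall architecture coincides with the paper's: verify the Mountain Pass Theorem hypotheses, prove Palais--Smale boundedness via the combination $\mu I_k(u_j)-I_k'(u_j)u_j$ with $(C_3)$, $(C_5)$ and Cauchy--Schwarz (the paper uses $2I_k-\tfrac{2}{\mu}I_k'u_j$, the same computation up to scaling), upgrade weak/uniform convergence to $E_k$-convergence, and produce the unbounded direction $(iv)$ from Lemma \ref{lemma1}. (Your choice of a $k$-dependent $e=\zeta q_0$ is fine for this lemma, though the paper's $e_k$, supported in $[-1,1]$ and extended by zero, is what later yields the $k$-uniform bound $c_k\le M_0$ in Lemma \ref{lemma3}.)

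However, your treatment of the geometry $(iii)$ has a genuine gap. First, the claimed estimate $W(t,q)\le\varepsilon|q|^2+C_\varepsilon|q|^\mu$ for all $q$ does not follow from $(C_4)$--$(C_5)$: condition $(C_5)$ yields, via the monotonicity argument in Lemma \ref{lemma1}, a \emph{lower} bound $W(t,q)\ge W(t,q/|q|)|q|^\mu$ for $|q|\ge1$ and an \emph{upper} bound $W(t,q)\le M|q|^\mu$ only for $|q|\le1$; there is no upper control of $W$ at infinity in the hypotheses. Second, and more seriously, the strategy ``choose $\varepsilon$ small and then $\rho$ small'' cannot close the estimate: the forcing term contributes $-\|f_k\|_{L^2_{2k}}\rho$, which is linear in $\rho$ with a \emph{fixed} size (the hypothesis \eqref{force} only bounds $\|f\|_{L^2}$ by a fixed constant, it does not let you shrink it), so for small $\rho$ it dominates the positive term $\tfrac12\bar b_1\rho^2$ and $I_k$ need not be positive on $\partial B_\rho(0)$. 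The missing idea is a \emph{definite} choice of radius: the paper takes $\varrho=\tfrac{\sqrt2}{2}$, the largest radius for which \eqref{sqrt} forces $\|q\|_{L^\infty_{2k}}\le1$, so that $W(t,q(t))\le M|q(t)|^\mu\le M|q(t)|^2$ pointwise and
\begin{displaymath}
  I_k(q)\;\ge\;\Bigl(\tfrac12\bar b_1-M\Bigr)\varrho^2-\|f_k\|_{L^2_{2k}}\varrho
  \;=\;\tfrac{\sqrt2}{2}\Bigl(\tfrac{\sqrt2}{4}(\bar b_1-2M)-\|f\|_{L^2}\Bigr)>0,
\end{displaymath}
which is positive precisely because of \eqref{force} and $M<\tfrac12\bar b_1$. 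A generic $\varepsilon$--$C_\varepsilon$ bookkeeping with unspecified constants cannot reach this sharp threshold for every $f$ admitted by \eqref{force}; the estimate must be carried out with the explicit constant $M$ at the explicit radius $\varrho=\tfrac{\sqrt2}{2}$.
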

  
  \begin{proof}\,
    We let $k\in\N$ be fixed and note at first that it is evident by $(C_2)$ and $(C_5)$
    that $I_k(0)=0$, which shows $(i)$ in Theorem \ref{Pass}.\\
    For checking the Palais-Smale condition $(ii)$, we consider a sequence
    $\{u_j\}_{j\in\N}\subset E_k$ such that $\{I_k(u_j)\}_{j\in\N}$ is bounded in $\R$
    and $I'_k(u_j)\to 0$ in $E^{*}_{k}$ as $j\to\infty$. Then there exists a constant
    $C_k>0$ such that for all $j\in\N$
    
    \begin{equation}\label{estim1}
      |I_k(u_j)|\leq C_k
    \end{equation}
    and
    
    \begin{equation}\label{estim2}
      \|I'_k(u_j)\|_{E^{*}_{k}}\leq C_k.
    \end{equation}
    Now, we will first show that $\{u_j\}_{j\in\N}$ is bounded in the Hilbert space $E_k$.
    Using \eqref{action} and $(C_5)$ we get
    
    \begin{align*}
      2I_k(u_j) & \geq \int_{-k}^{k}\left(|\dot{u}_{j}(t)|^2+2K_k(t,u_j(t))\right)dt
      - \frac{2}{\mu}\int_{-k}^{k}(\nabla_{q}W_k(t,u_j(t)),u_j(t))dt \\ &
      + 2\int_{-k}^{k}(f_k(t),u_j(t))dt.
    \end{align*}
    From \eqref{derivative} and $(C_3)$ it follows that
    
    \begin{align*}
      I'_{k}(u_j)u_j & \leq \int_{-k}^{k}\left(|\dot{u}_{j}(t)|^2+2K_k(t,u_j(t))\right)dt
      - \int_{-k}^{k}(\nabla_{q}W_k(t,u_j(t)),u_j(t))dt \\ &
      + \int_{-k}^{k}(f_k(t),u_j(t))dt.
    \end{align*}
    Thus
    
    \begin{align*}
      2I_k(u_j)-\frac{2}{\mu}I'_{k}(u_j)u_j & \geq
      \left(1-\frac{2}{\mu}\right)\int_{-k}^{k}\left(|\dot{u}_{j}(t)|^2+2K_k(t,u_j(t))\right)dt \\ &
      + \left(2-\frac{2}{\mu}\right)\int_{-k}^{k}(f_k(t),u_j(t))dt,
    \end{align*}
    and by $(C_2)$ we have
    
    \begin{align*}
      2I_k(u_j)-\frac{2}{\mu}I'_{k}(u_j)u_j \geq
      \left(1-\frac{2}{\mu}\right)\bar{b}_{1}\|u_j\|_{E_k}^{2}
      + \left(2-\frac{2}{\mu}\right)\int_{-k}^{k}(f_k(t),u_j(t))dt.
    \end{align*}
    Finally, aplying the H\"{o}lder inequality, as well as \eqref{estimation}, \eqref{estim1}
    and \eqref{estim2}, we obtain
    
    \begin{displaymath}
      \left(1-\frac{2}{\mu}\right)\bar{b}_{1}\|u_j\|_{E_k}^{2}
      -\frac{2C_k}{\mu}\|u_j\|_{E_k}
      -\frac{\sqrt{2}}{4}(\bar{b}_{1}-2M)\left(2-\frac{2}{\mu}\right)\|u_j\|_{E_k}
      -2C_k \leq 0.
    \end{displaymath}
    Since $\mu>2$ we conclude that $\{u_j\}$ is bounded.\\
    Going to a subsequence if necessary, we can assume that there exists a function
    $u\in E_k$ such that $u_j\rightharpoonup u$ weakly in $E_k$ as $j\to +\infty$.
    Hence $u_j\to u$ uniformly on $[-k,k]$, which implies that
    
    \begin{align}\label{conv1}
      \begin{split}
        \left(I'_k(u_j)-I'_k(u)\right)(u_j-u) & \to 0,\\
        \|u_j-u\|_{L^{2}_{2k}} & \to 0
      \end{split}
    \end{align}
    and
    
    \begin{align*}
      \int^{k}_{-k}(\nabla_{q}K_k(t,u_j(t)) & -\nabla_{q}W_k(t,u_j(t)),u_j(t)-u(t))dt\\ &
      - \int^{k}_{-k}(\nabla_{q}K_k(t,u(t))-\nabla_{q}W_k(t,u(t)),u_j(t)-u(t))dt \to 0
    \end{align*}
    as $j\to +\infty$. On the other hand, it is readily seen that
    
    \begin{align*}
      \|\dot{u}_j-\dot{u}\|^{2}_{L^{2}_{2k}} & =(I'_{k}(u_j)-I'_{k}(u))(u_j-u)\\ &
      + \int^{k}_{-k}(\nabla_{q}K_k(t,u_j(t))-\nabla_{q}W_k(t,u_j(t)),u_j(t)-u(t))dt\\ &
      - \int^{k}_{-k}(\nabla_{q}K_k(t,u(t))-\nabla_{q}W_k(t,u(t)),u_j(t)-u(t))dt,
    \end{align*}
    and consequently
    
    \begin{equation}\label{conv2}
      \|\dot{u}_j-\dot{u}\|_{L^{2}_{2k}}\to 0.
    \end{equation}
    By \eqref{conv1} and \eqref{conv2}, we see that $\|u_j-u\|_{E_k}\to 0$, and thus $I_k$
    satisfies the Palais-Smale condition.\\
    To show $(iii)$, we set
    
    \begin{displaymath}
      \varrho=\frac{\sqrt{2}}{2}
    \end{displaymath}
    and assume that $q\in E_{k}$ such that $\|q\|_{E_k}=\varrho$.
    Note that $\|q\|_{L^{\infty}_{2k}}\leq 1$ by \eqref{sqrt}. Thus, we can apply
    \eqref{leq} to obtain
    
    \begin{displaymath}
      \int^{k}_{-k}W(t,q(t))dt \leq
      \int^{k}_{-k}W\left(t,\frac{q(t)}{|q(t)|}\right)|q(t)|^{\mu}dt \leq
      M\int^{k}_{-k}|q(t)|^{2}dt \leq
      M\|q\|^{2}_{E_k} = \frac{1}{2}M.
    \end{displaymath}
    From this, $(C_2)$ and \eqref{force}, we get
    
    \begin{align}\label{alpha}
      \begin{split}
        I_k(q) & \geq
        \frac{1}{2}\bar{b}_{1}\|q\|_{E_k}^{2}-\frac{1}{2}M-\|f_k\|_{L^{2}_{2k}}\|q\|_{E_k} \\
        & \geq \frac{1}{4}(\bar{b}_{1}-2M)-\frac{\sqrt{2}}{2}\|f\|_{L^2} \\
        & = \frac{\sqrt{2}}{2}\left(\frac{\sqrt{2}}{4}(\bar{b}_{1}-2M)-\|f\|_{L^2}\right)
        \equiv \alpha>0
      \end{split}
    \end{align}
    To complete the proof, we have to show $(iv)$, i.e. we need to find $e_k\in E_k$
    such that $\|e_k\|_{E_k}>\rho$ and $I_k(e_k)\leq 0$.
    
    \newpage
    Let
    
    \begin{displaymath}
      \bar{b}_{2}=\max\{1,2b_2\}.
    \end{displaymath}
    Combining \eqref{action} and Lemma \ref{lemma1} gives
    
    \begin{equation}\label{function}
      I_k(\zeta q) \leq \frac{\bar{b}_{2}\zeta^2}{2}\|q\|^{2}_{E_k}
      -m|\zeta|^{\mu}\int^{k}_{-k}|q(t)|^{\mu}dt
      +|\zeta|\cdot\|f_k\|_{L^{2}_{2k}}\|q\|_{E_k}+2km
    \end{equation}
    for all $\zeta\in\R\setminus\{0\}$ and $q\in E_k\setminus\{0\}$.\\
    We now let $Q\in E_1$ be such that $Q\neq 0$ and $Q(-1)=Q(1)=0$.
    It follows from \eqref{function} that $\|\zeta Q\|_{E_1}>\rho$
    and $I_1(\zeta Q)<0$ for $\zeta\in\R\setminus\{0\}$ large enough.
    Hence, if we define $e_1(t)=\zeta Q(t)$ and for each $k\geq 2$,
    
    \begin{equation}\label{vector}
      e_k(t)=\left\{
             \begin{array}{lr}
               e_1(t) & \textrm{for}\, \, t\in[-1,1],\\
               0 & \text{for}\, \, t\in[-k,-1)\cup(1,k],
             \end{array}
             \right.
    \end{equation}
    then $e_k\in E_k$, and $\|e_k\|_{E_k}=\|e_1\|_{E_1}>\rho$ as well as $I_k(e_k)=I_1(e_1)<0$.\\
    In summary, it follows from Theorem \ref{Pass} that the action functional $I_k$
    has a critical value $c_k\geq\alpha$ given by
    
    \begin{equation}\label{critical}
      c_k=\inf_{g\in\Gamma_k}\max_{s\in[0,1]}I_k(g(s)),
    \end{equation}
    where
  
    \begin{displaymath}
      \Gamma_k=\left\{g\in C([0,1],E_k)\colon g(0)=0,\ g(1)=e_k \right\}.
    \end{displaymath}
  \end{proof}

  In what follows, we let $q_k$ be a critical point for the corresponding critical value $c_k$
  that we have found in Lemma \ref{lemma2}. The functions $q_k$, $k\in\N$, are solutions
  of \eqref{hsk} and as second step of our proof of Theorem \ref{mthm}, we now want to apply
  Theorem \ref{approx} to this sequence of functions. 
  
  \begin{lemma}\label{lemma3}
    The sequence $\{\|q_k\|_{E_k}\}_{k\in\N}\subset\R$ is bounded.
  \end{lemma}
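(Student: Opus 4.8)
The plan is to use that $q_k$ is a mountain‑pass critical point, so $I_k(q_k)=c_k$ and $I_k'(q_k)=0$, and to combine this with a bound for $c_k$ that does not depend on $k$. For the first ingredient I would simply re‑run the coercivity estimate already established in the Palais–Smale verification of Lemma \ref{lemma2}: using $(C_3)$, $(C_5)$, $(C_2)$ and \eqref{action}, \eqref{derivative} one has, for every $u\in E_k$,
\[
2I_k(u)-\tfrac{2}{\mu}I_k'(u)u\ \geq\ \Bigl(1-\tfrac{2}{\mu}\Bigr)\bar{b}_1\|u\|_{E_k}^2+\Bigl(2-\tfrac{2}{\mu}\Bigr)\int_{-k}^{k}(f_k(t),u(t))\,dt .
\]
Taking $u=q_k$, using $I_k'(q_k)=0$, $I_k(q_k)=c_k$, the Cauchy–Schwarz inequality together with $\|q_k\|_{L^2_{2k}}\leq\|q_k\|_{E_k}$, and \eqref{estimation}, this yields
\[
2c_k\ \geq\ \Bigl(1-\tfrac{2}{\mu}\Bigr)\bar{b}_1\|q_k\|_{E_k}^2-\Bigl(2-\tfrac{2}{\mu}\Bigr)\tfrac{\sqrt{2}}{4}(\bar{b}_1-2M)\,\|q_k\|_{E_k}.
\]
Since $\mu>2$, the coefficient of $\|q_k\|_{E_k}^2$ is strictly positive, so it will suffice to bound $c_k$ from above uniformly in $k$.

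For the second ingredient I would test the infimum in \eqref{critical} against the single path $g_0(s)=s\,e_k\in\Gamma_k$, giving $c_k\leq\max_{s\in[0,1]}I_k(s e_k)$. The point is that $e_k$, defined in \eqref{vector}, is supported in $[-1,1]$ and coincides there with $e_1$; moreover $K(t,0)=0$ by $(C_2)$ and $W(t,0)=0$ (which follows from $(C_5)$ and the continuity of $W$), and $f_k=f$ on $[-1,1]$. Hence every integral in \eqref{action} evaluated at $s e_k$ reduces to the corresponding integral over $[-1,1]$, so that $I_k(s e_k)=I_1(s e_1)$ for all $k\in\N$ and all $s\in[0,1]$. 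Consequently
\[
c_k\ \leq\ \max_{s\in[0,1]}I_1(s e_1)=:C_0 ,
\]
a finite constant independent of $k$ (finiteness is clear since $s\mapsto I_1(s e_1)$ is continuous on the compact interval $[0,1]$). Substituting $c_k\leq C_0$ into the quadratic inequality above produces a bound $\|q_k\|_{E_k}\leq R$ with $R$ independent of $k$, which is the claim.

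The only genuinely delicate point — and the one I would write out carefully — is the identity $I_k(s e_k)=I_1(s e_1)$, i.e.\ that extending $e_1$ by zero to $[-k,k]$ contributes nothing new to the action on $[-k,-1)\cup(1,k]$; this rests precisely on the vanishing of $K$ and $W$ at the origin. Everything else is a direct reuse of the energy identity and estimates already carried out in the proof of Lemma \ref{lemma2}, so no new analytical difficulty is expected.
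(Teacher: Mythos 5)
Your proposal is correct and follows essentially the same strategy as the paper: a uniform upper bound $c_k\leq\max_{s\in[0,1]}I_1(se_1)$ obtained by testing \eqref{critical} against the path $s\mapsto se_k$ (using that $e_k$ vanishes outside $[-1,1]$ and $K(t,0)=W(t,0)=0$), combined with a quadratic inequality in $\|q_k\|_{E_k}$ with $k$-independent coefficients coming from $(C_2)$, $(C_3)$, $(C_5)$, \eqref{estimation} and $I_k'(q_k)=0$. The only cosmetic difference is that you reuse the combination $2I_k(q_k)-\frac{2}{\mu}I_k'(q_k)q_k$ from the Palais--Smale step, whereas the paper works with $I_k(q_k)-\frac{1}{2}I_k'(q_k)q_k$ and an intermediate bound on $\int_{-k}^{k}W_k(t,q_k(t))\,dt$; both yield the same conclusion.
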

  
  \begin{proof}\,
    We set
    
    \begin{displaymath}
      M_0=\max_{s\in[0,1]}I_1(se_1).
    \end{displaymath}
    and conclude from \eqref{vector} and \eqref{critical} that
    
    \begin{equation}\label{M0}
      c_k\leq M_0
    \end{equation}
    for each $k\in\N$. By assumption,
    
    \begin{align*}
      c_k & =I_k(q_k)=I_k(q_k)-\frac{1}{2}I'_{k}(q_k)q_k
      = \int_{-k}^{k}\left(K_k(t,q_k(t))-\frac{1}{2}(\nabla_{q}K_k(t,q_k(t)),q_k(t))\,\right)dt \\
      & +\int_{-k}^{k}\left(\frac{1}{2}(\nabla_{q}W_k(t,q_k(t)),q_k(t))-W_k(t,q_k(t))\,\right)dt
      +\frac{1}{2}\int_{-k}^{k}(f_k(t),q_k(t))dt.
    \end{align*}
    Applying $(C_3)$ and $(C_5)$ we obtain
    
    \begin{displaymath}
      c_k\geq \left(\frac{\mu}{2}-1\right)\int^k_{-k}W_k(t,q_k(t))dt
      + \frac{1}{2}\int^k_{-k}(f_k(t),q_k(t))dt.
    \end{displaymath}
    Furthermore, it follows from \eqref{action} and $(C_2)$ that
    
    \begin{displaymath}
      \int^k_{-k}{W_{k}(t,q_k(t))dt}\geq\frac{1}{2}\bar{b}_{1}\|q_k\|_{E_k}^{2}
      + \int_{-k}^{k}(f_k(t),q_k(t))dt - I_k(q_k).
    \end{displaymath}
    Using that $I_k(q_k)=c_k$, the previous two inequalities give
    
    \begin{displaymath}
      \frac{1}{2}\bar{b}_{1}\|q_k\|_{E_k}^{2}
      -\frac{\mu-1}{\mu-2}\|f_k\|_{L^{2}_{2k}}\|q_k\|_{E_k}
      -\frac{\mu}{\mu-2}c_k \leq 0,
    \end{displaymath}
    which implies by \eqref{force} and \eqref{M0} that
    
    \begin{displaymath}
      \frac{1}{2}\bar{b}_{1}\|q_k\|_{E_k}^{2}
      -\frac{\sqrt{2}}{4}\left(\bar{b}_{1}-2M\right)\frac{\mu-1}{\mu-2}\|q_k\|_{E_k}
      -\frac{\mu}{\mu-2}M_0 \leq 0 .
    \end{displaymath}
    Hence there is $M_1>0$ such that for each $k\in\N$,
    
    \begin{displaymath}
      \|q_k\|_{E_k}\leq M_1.
    \end{displaymath}
  \end{proof}
  
  Now, using Theorem \ref{approx} we see that there exists a solution
  $q\colon\R\to\R^n$ of \eqref{hs} such that $q(t)\to 0$ as $|t|\to\infty$.
  
  All what is left to show for the proof of Theorem \ref{mthm} is that actually
  $\dot{q}(t)\to 0$ as $|t|\to\infty$. This, however, follows from the inequality
  
  \begin{equation}\label{izyjan}
    |\dot{q}(t)|\leq\sqrt{2}\left(\int_{t-\frac{1}{2}}^{t+\frac{1}{2}}
    \left( |\dot{q}(s)|^2 + |\ddot{q}(s)|^2 \right)ds\right)^{\frac{1}{2}},\quad t\in\mathbb{R},
  \end{equation}
  which can be found in \cite{IzJ} (Inequality $(28)$, p.\ $385$).
  Indeed, we just need to note that by \eqref{hs}, $(C_2)$, $(C_4)$ and \eqref{force}
  
  \begin{displaymath}
    \int\limits_{t-\frac{1}{2}}\limits^{t+\frac{1}{2}}|\ddot{q}(s)|^{2}ds\to 0,\quad |t|\to\infty.
  \end{displaymath}
  If now $|t|$ goes to $\infty$ in \eqref{izyjan} we see that $|\dot{q}(t)|\to 0$ as $|t|\to\infty$.
  Consequently, $q$ is a solution of \eqref{hs} and the proof of Theorem \ref{mthm} is complete.

  \section{One-dimensional Examples}\label{Examples}
  
  In this section we present examples for $n=1$ satisfying
  the assumptions of Theorem \ref{mthm}, and the graphs
  of their approximating solutions $q_k$ of \eqref{hsk} for increasing values of $k$.
  
  \begin{example}
    Consider $K\colon\R\times\R\to\R$, $W\colon\R\times\R\to\R$
    and $f\colon\R\to\R$ given by
    
    \begin{displaymath}
      K(t,q)=\frac{t^2+1}{t^2+2}q^2,
    \end{displaymath}
    
    \begin{displaymath}
      W(t,q)=\frac{t^2+12}{3t^2+27}q^4
    \end{displaymath}
    and
    
    \begin{displaymath}
      f(t)=\frac{1}{36}e^{-t^2},
    \end{displaymath}
    where $t,q\in\R$.
    One can easily check that $K,W$ and $f$ satisfy the assumptions
    of Theorem \ref{mthm}.
    The figures \ref{fig:57}-\ref{fig:250} show the graphs
    of numerical solutions $q_k$ of \eqref{hsk}
    for $k = 57, 100, 250$. 
  \end{example}
  
  \begin{example}
    Let $K\colon\R\times\R\to\R$, $W\colon\R\times\R\to\R$
    and $f\colon\R\to\R$ be given by
    
    \begin{displaymath}
      K(t,q)=\left(\frac{1}{8}sin(t)+\frac{1}{8}sin(\sqrt{2}t)+\frac{3}{4}\right)q^2,
    \end{displaymath}
    
    \begin{displaymath}
      W(t,q)=\frac{1}{4}q^4
    \end{displaymath}
    and
    
    \begin{displaymath}
      f(t)=\frac{1}{32}e^{-t^2},
    \end{displaymath}
    where $t,q\in\R$.
    It is immediate that $K,W$ and $f$ satisfy the assumptions of Theorem \ref{mthm}.
    The figures \ref{2fig:10}-\ref{2fig:160} show the graphs of numerical solutions
    $q_k$ of \eqref{hsk} for $k = 10, 40, 160$.
  \end{example}
  
   \begin{example}
    Consider $K\colon\R\times\R\to\R$, $W\colon\R\times\R\to\R$
    and $f\colon\R\to\R$ given by
    
    \begin{displaymath}
      K(t,q)=q^2,
    \end{displaymath}
    
    \begin{displaymath}
      W(t,q)=\frac{10}{33}q^4\left(arctg^2\left(\frac{q^2}{t^2+1}\right)+1\right)
    \end{displaymath}
    and
    
    \begin{displaymath}
      f(t)=\frac{1+t^2}{10}e^{-t^2},
    \end{displaymath}
    where $t,q\in\R$.
    Again, it is readily seen that $K,W$ and $f$ satisfy the assumptions
    of Theorem \ref{mthm}.
    The figures \ref{3fig:100}-\ref{3fig:180} show the graphs
    of numerical solutions $q_k$ of \eqref{hsk}
    for $k = 100, 140, 180$. 
  \end{example}
  
  %% -------------------------------------------------------------------
      
  \begin{figure}[t]
    \centering
    \includegraphics[width=10cm]{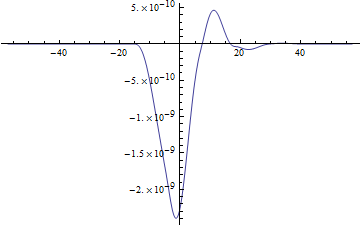}
    \caption{A numerical solution of \eqref{hsk} for $k=57$ in Example 1}
    \label{fig:57}
  \end{figure}
  
  \begin{figure}[b]
    \centering
    \includegraphics[width=10cm]{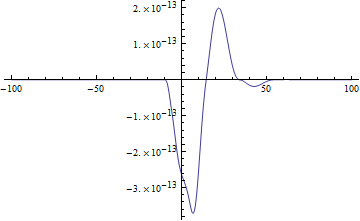}
    \caption{A numerical solution of \eqref{hsk} for $k=100$ in Example 1}
    \label{fig:100}
  \end{figure}
  
  \begin{figure}[p]
    \centering
    \includegraphics[width=10cm]{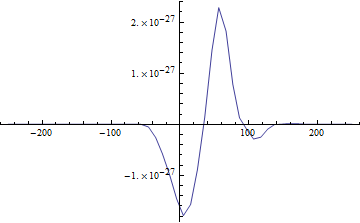}
    \caption{A numerical solution of \eqref{hsk} for $k=250$ in Example 1}
    \label{fig:250}
  \end{figure}
  
  \begin{figure}[p]
    \centering
    \includegraphics[width=10cm]{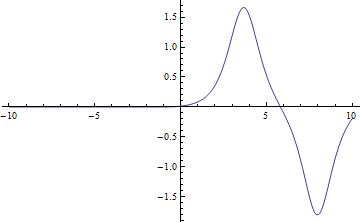}
    \caption{A numerical solution of \eqref{hsk} for $k=10$ in Example 2}
    \label{2fig:10}
  \end{figure}
  
  \begin{figure}[p]
    \centering
    \includegraphics[width=10cm]{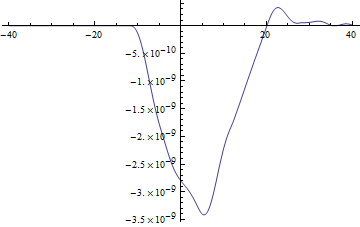}
    \caption{A numerical solution of \eqref{hsk} for $k=40$ in Example 2}
    \label{2fig:40}
  \end{figure}
  
  \begin{figure}[p]
    \centering
    \includegraphics[width=10cm]{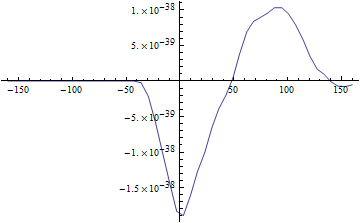}
    \caption{A numerical solution of \eqref{hsk} for $k=160$ in Example 2}
    \label{2fig:160}
  \end{figure}
    
  \begin{figure}[p]
    \centering
    \includegraphics[width=10cm]{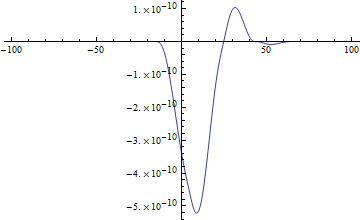}
    \caption{A numerical solution of \eqref{hsk} for $k=100$ in Example 3}
    \label{3fig:100}
  \end{figure}
  
  \begin{figure}[p]
    \centering
    \includegraphics[width=10cm]{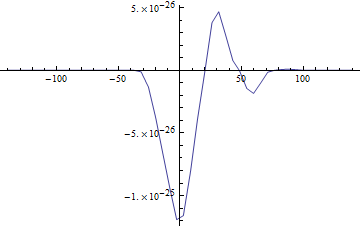}
    \caption{A numerical solution of \eqref{hsk} for $k=140$ in Example 3}
    \label{3fig:140}
  \end{figure}
  
  \begin{figure}[t]
    \centering
    \includegraphics[width=10cm]{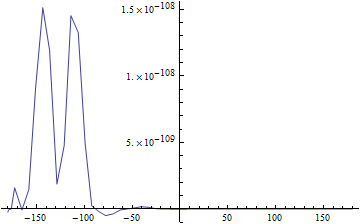}
    \caption{A numerical solution of \eqref{hsk} for $k=180$ in Example 3}
    \label{3fig:180}
  \end{figure}
  
  %% ------------------------------------------------------------------
  
  \newpage
  
  \bibstyle

  \end{document}